%
%
%


\documentclass[12pt]{amsart}

\usepackage{amssymb}

\usepackage{graphicx}

\usepackage[cmtip,all]{xy}


\newtheorem{theorem}{Theorem}[section]

\newtheorem{proposition}[theorem]{Proposition}
\newtheorem{corollary}[theorem]{Corollary}
\theoremstyle{definition}
\newtheorem{definition}[theorem]{Definition}

\theoremstyle{remark}

\numberwithin{equation}{section}

\begin{document}

 \title[Type 1 subdiagonal algebras]{ Maximality and finiteness of type 1 subdiagonal algebras}


\author{Guoxing Ji}
\address{School  of Mathematics and Statistics,
  Shaanxi Normal University,
  Xian , 710119, People's  Republic of  China}
\email{gxji@snnu.edu.cn}
\thanks{This research was
supported by the National Natural
   Science Foundation of China(No. 11771261) and the Fundamental Research Funds for the Central Universities (Grant No. GK201801011)}


\subjclass[2010]{Primary 46L52, 47L75 }

\date{}

\dedicatory{}


\begin{abstract}Let $\mathfrak A$ be a type 1 subdiagonal algebra in a $\sigma$-finite von Neumann algebra $\mathcal M$ with respect to a faithful normal conditional  expectation $\Phi$. We  give necessary and sufficient  conditions for  which $\mathfrak A$ is maximal among the $\sigma$-weakly closed subalgebras of $\mathcal M$.  In addition, we show that a  type 1 subdiagonal algebra in a finite von Neumann algebra is automatically finite which gives a positive answer of Arveson's finiteness problem in 1967 in type 1 case.
\end{abstract}

\maketitle


\bibliographystyle{amsplain}



\baselineskip18pt
\section{Introduction}
There are fruitful theorems in classical Hardy space theory. For example,  A  well-known classical result  on     bounded analytic function algebra $H^{\infty}(\mathbb T)$  is that it is   maximal           as a $w^*$-closed subalgebras in $L^{\infty}(\mathbb T)$.
       A noncommutative analogue is obtained by replacing $L^{\infty}(\mathbb T)$
by a von Neumann algebra $\mathcal M$  and $H^{\infty}(\mathbb T)$ by a unital $\sigma$-weakly closed subalgebra $\mathfrak A$ of $\mathcal M$.  There are many  successful noncommutative extensions of classical $H^p$ space theory from now on. One very important notion is subdiagonal algebras introduced by  Arveson in \cite{arv1}.       Based on  subdiagonal algebras, noncommutative Hardy spaces are developed(cf. \cite{ble,ble1,ble2,ble3,jig,jig1,jig2,jio,jis}).
For example, Marsalli and West \cite{mar2} gave a  Riesz factorization theorem  for finite  noncommutative $H^p$ spaces.
  Blecher and    Labuschagne established   Beurling type invariant subspace  theorems for a  finite subdiagonal algebra in \cite{ble4} and  Labuschagne in \cite{lab} extended  their results to noncommutative $H^2$ for  maximal subdiagonal algebras in a $\sigma$-finite von Neumann algebra. 
 On the other hand, several authors are of interest in  the maximality of an analytic  subalgebra  as a $\sigma$-weakly
  closed subalgebra in a von Neumann algebra.
 McAsey, Muhly and Saito(\cite{mms,mms1}) considered the maximality of  analytic crossed products. A necessary and sufficient condition for the maximality of analytic operator algebra $H^{\infty}(\alpha)$ determined by a flow $\alpha$ on a von Neumann algebra $\mathcal M$ is given by Solel in \cite{sol}. Very recently, Peligrad gave a complete solution  of the maximality problem for one-parameter dynamical systems in \cite{pel}.    Subdiagonal algebras are very  important classes of analytic operator algebras. It becomes interesting to determine the maximality of these algebras.
   We consider  related problems in this paper.  We
   firstly
   recall some notions.

  Let $\mathcal M$ be
a $\sigma$-finite von Neumann algebra acting on a complex  Hilbert
$\mathcal H$. $Z(\mathcal M)=\mathcal M\cap\mathcal M^{\prime}$ is the center of $\mathcal M$, where $\mathcal M^{\prime}$ is the commutant of $\mathcal M$. If $Z(\mathcal M)=\mathbb C I$,
the multiples of identity $I$, then $\mathcal M$ is said  to be a factor.  We denote by $\mathcal M_*$ the space of all
$\sigma$-weakly continuous linear functionals of $\mathcal M$. Let
$\Phi$  be  a faithful normal conditional expectation from $\mathcal
M$ onto  a von Neumann subalgebra   $\mathfrak D$. Arveson \cite{arv1} gave the following definition. A subalgebra
$\mathfrak A$ of $\mathcal M$, containing $\mathfrak D$, is called a
subdiagonal algebra of $\mathcal M$ with respect to $\Phi$ if

(i) $\mathfrak A \cap \mathfrak A^{*} =\mathfrak D$,

(ii) $\Phi$ is multiplicative on $\mathfrak A$, and

(iii)  $\mathfrak A + \mathfrak A^*$ is $\sigma$-weakly dense in
$\mathcal M$.

\noindent The algebra $\mathfrak D$ is called the diagonal of
$\mathfrak A$.
  we  may assume that  subdiagonal
 algebras are  $\sigma$-weakly closed without loss generality(cf.\cite{arv1}).

  We say that
$\mathfrak A$ is a maximal subdiagonal algebra in $\mathcal M$ with
respect to $\Phi$ in case that $\mathfrak A$ is not properly
contained in any other subalgebra of $\mathcal M$ which is
subdiagonal with respect to $\Phi$. Put $\mathfrak A_0 =\{X \in
\mathfrak A: \Phi(X)=0 \}$ and  $\mathfrak A_m =\{X \in \mathcal M:
\Phi(AXB)=\Phi(BXA)=0,~ \forall A \in \mathfrak A,~ B \in \mathfrak
A_0 \}$.
 By   \cite[Theorem 2.2.1]{arv1}, we recall that $\mathfrak A_m $ is a maximal
 subdiagonal algebra of $\mathcal M$ with respect to $\Phi$ containing
 $\mathfrak A$.  $\mathfrak A$ is said to be finite if there is a faithful normal finite trace $\tau$ on $\mathcal M$ such that $\tau\circ\Phi=\tau$.  Finite subdiagonal algebras are  maximal subdiagonal(cf.\cite{exe}). A well known problem  given by Arveson in \cite{arv1} is whether a subdiagonal algebra in a finite von Neumann algebra is automatically finite. It is still open now.

 We  next recall Haagerup's noncommutative $L^p$ spaces associated with a
 general von Neumann algebra $\mathcal M$(cf.\cite{haa,ter}). Let $\varphi$ be a faithful
 normal state on $\mathcal M$ and  let $\{\sigma_t^{\varphi}:t\in\mathbb R\}$
  be the modular automorphism group of $\mathcal M$ associated with $\varphi$ by Tomita-Takesaki theory.
   We consider  the crossed product
 $\mathcal N=\mathcal M\rtimes_{\sigma^{\varphi}} \mathbb R$ of $\mathcal M$
 by $\mathbb R$ with respect to $\sigma^{\varphi}$.
   We denote by $\theta$ the dual action of $\mathbb R$ on $\mathcal
  N$. Then $\{\theta_s:s\in \mathbb R\}$  is an  automorphisms group
  of $\mathcal N$. 
Note that $\mathcal M=\{X\in \mathcal N: \theta_s(X)=X,\forall
s\in\mathbb R\}$.  $\mathcal N$ is a semifinite von Neumann algebra
and there is the normal faithful semifinite trace $\tau$ on
$\mathcal N$ satisfying
$$\tau\circ \theta_s=e^{-s}\tau,  \ \ \  \forall s\in\mathbb R.$$

According to Haagerup \cite{haa,ter}, the noncommutative $L^p$
spaces $L^p(\mathcal M)$ for each $0< p\leq \infty$ is defined as
the set of all $\tau$-measurable  operators $x$   affiliated with
$\mathcal N$ satisfying
$$
\theta_s(x)=e^{-\frac{s}{p}}x,  \ \forall  s\in\mathbb R.$$ There is
a linear bijection  between the predual $\mathcal M_*$ of $\mathcal
M$ and $L^1(\mathcal M)$: $f\to h_f$. If we define
$\mbox{tr}(h_f)=f(I), f\in$$ \mathcal M_*$, then $$
\mbox{tr}(|h_f|)=\mbox{tr}(h_{|f|})=|f|(I)=\|f\|$$ for all
 $f\in \mathcal M_*$ and
$$|\mbox{tr}(x)|\leq\mbox{tr}(|x|)$$ for all  $x\in L^1(\mathcal M)$.  Note that for any $x\in L^p(\mathcal M)$,  $\|x\|_p=(tr(|x|^p))^{\frac1p}$ is the norm of $x$.
As in \cite{haa}, we define the operator $L_A$ and $R_A$ on
$L^p(\mathcal M)$($1\leq p<\infty)$  by $L_Ax=Ax$ and $R_Ax=xA$ for
all  $A\in\mathcal M$ and $x\in L^p(\mathcal M)$.
 Note that
$L^2(\mathcal M)$ is a Hilbert space with the inner product $\langle a,b \rangle
=\mbox{tr}(b^*a)$, $ \forall a,b \in L^2(\mathcal M)$ and
  $A\to L_A$( resp. $A\to R_A$) is a
  faithful normal $*$-representation (resp. $*$-anti-representation) of $\mathcal
  M$ on $L^2(\mathcal M)$. We may identify $\mathcal M$ with
  $L(\mathcal M)=\{L_A:A\in \mathcal M\}$.

   Let $\mathfrak A$ be a maximal subdiagonal algebra with respect to $\Phi$ such that $\varphi\circ\Phi=\varphi$.
It is known that the noncommutative $H^p$ space  $H^p(\mathcal M)$  and $H_0^p(\mathcal M)$ in $L^p(\mathcal M)$
for any $1\leq p<\infty$  is  $H^p=H^p(\mathcal M)=[h_0^{\frac{\theta}{p}}\mathfrak Ah_0^{\frac{1-\theta}p}]_p$ and $H_0^p=H_0^p(\mathcal M)=[h_0^{\frac{\theta}{p}}\mathfrak A_0h_0^{\frac{1-\theta}p}]_p$ for any $\theta\in[0,1]$
   from \cite[Definition
2.6]{jig} and \cite[Proposition 2.1]{jig1}. If $p=\infty$, then we  identify  $H^{\infty}$ as $\mathfrak A$ and $H_0^{\infty}$ as $\mathfrak A_0$.

In this paper, we  consider  type 1 subdiagonal algebras introduced in \cite{jig2}.
 We give    necessary and sufficient conditions for a type 1 subdiagonal algebra to be maximal in $\sigma$-weakly closed subalgebras of a von Neumann algebra. In addition, we devote to Arveson's    finiteness problem of subdiagonal algebras in a finite von Neumann algebra.

\section{Maximality of  type 1 subdiagonal algebras}

 A $\sigma$-weakly closed proper  subalgebra $\mathcal A\subseteq \mathcal M$ is said to be maximal if it can not be  contained  in any  other  $\sigma$-weakly closed proper subalgebras of $\mathcal M$. We consider  conditions for which  a type 1 subdiagonal algebra to be  maximal. We need certain invariant space representations of a maximal subdiagonal algebras in noncommutative $L^p$ spaces. So we firstly recall that the notion of column $L^p$-sum of noncommutative $L^p$-space $L^p(\mathcal M)(1\leq p\leq \infty)$ for a $\sigma$-finite von Neumann algebra $\mathcal M$ studied by Junge and Sherman \cite{js}.  Assume that  $X$ is a closed subspace of $L^p(\mathcal M)$. If
$\{X_i : i\in\Lambda \}$  is a family  of  closed subspaces
of $X$ such that  $X=\vee\{X_i:i\in\Lambda\}$ with the property that $X_j^*X_i=\{0\}$ if $i\not=j$,
then we say that X is the internal column $L^p$-sum  $\oplus_{i\in\Lambda}^{col}X_i$. If $p=\infty$, we  assume
that $X $ and $\{X_i:i\in\Lambda\}$ are $\sigma$-weakly closed, and the  closed linear span is taken with the $\sigma$-weak topology.  For symmetry,  if $X_jX_i^*=\{0\}$ if $i\not=j$ and  $X=\vee\{X_i:i\in\Lambda\}$, then we say that $X$ is the
  internal row $L^p$-sum  $\oplus_{i\in\Lambda}^{row}X_i$. In addition, we also give the following definition  which in fact is used in \cite{ble4,lab}. Let $\mathfrak A $ be a maximal subdiagonal algebra with respect to $\Phi$ and $\mathfrak D$ is the diagonal.
\begin{definition}Let  $\mathcal U=\{U_n:n\geq 1\}$ in $\mathcal M$  be   a family of partial isometries.  If $U_n^*U_m=0$(resp. $U_nU_m^*=0$) for $n\not=m$ and  $U_n^*U_n\in\mathfrak D$(resp. $U_nU_n^*\in\mathfrak D$) for all $n$, then we say that  $\{U_n:n\geq 1\}$ is  column(resp. row) orthogonal.
\end{definition}
 Note that $\mathcal U=\{U_n:n\geq 1\}$ is column orthogonal if and only if $\mathcal U^*=\{U_n^*:n\geq 1\}$ is row orthogonal.

 We recall that a  closed subspace $\mathfrak M$
of $ L^p(\mathcal M)$  is right(resp. left)  invariant if  $\mathfrak M
\mathfrak A\subseteq \mathfrak M$(resp. $ \mathfrak A\mathfrak M
\subseteq \mathfrak M$).  If it is both left and  right invariant, then we say that it is two-side invariant.   Following \cite{ble4,lab}, we define the right wandering subspace of
$\mathfrak M$  to be the space $K = \mathfrak M\ominus [\mathfrak M\mathfrak A_0]_2$ when $p=2$, where $[S]_p$ is the closed linear span of a subset $S$ in $L^p(\mathcal M)$. We say that $\mathfrak M$ is of  type 1 if $K$ generates $\mathfrak M$ as an
$\mathfrak A$-module (that is, $\mathfrak M = [K\mathfrak A]_2$). We   say that $\mathfrak M$ is  of type 2 if $K = \{0\}$.
 Note that every   right invariant subspace $\mathfrak M$ is an $L^2$-column sum $\mathfrak M=\mathfrak N_1\oplus^{col}\mathfrak N_2$, where     $\mathfrak N_i$  is of  type $i$ for $i=1,2$ from \cite[Theorem 2.1]{ble4} and
\cite[Theorem 2.3]{lab}.  In particular,   if $\mathfrak M$ is of   type 1, then $\mathfrak M$ is of the Beurling type, that is, there exists a family  of column orthogonal partial isometries $\{U_n:n\geq 1\}$  such that $\mathfrak M=\oplus_{n}^{col}U_nH^2 $.  We refer    to {\cite{ble4,lab} for  more details. Symmetrically, a type 1 left invariant subspace $\mathfrak M$  may be represented as
 $\mathfrak M=\oplus_{n\geq 1}^{row}H^2V_n$ by a family of row orthogonal partial isometries $\{V_n:n\geq1\}$ and this fact will be used frequently.

  By  \cite[Definition 2.1]{jig2}, $\mathfrak A$ is said to be type 1 if every right invariant subspace of $\mathfrak A$ in $ H^2$ is of  type 1. Then   there exists a column orthogonal family of  partial isometries $\{U_n:n\geq1\}$ in $\mathcal M$  such that   \begin{equation}
 H_0^2 = \oplus_{n\geq 1}^{col} U_n  H^2.
 \end{equation}
 To consider the maximality of $\mathfrak A$, we firstly  consider invariant subspaces  in non commutative $L^1(\mathcal M)$ and 
 $\mathcal M$.  

 For a family of column orthogonal partial isometries
  $\mathcal W=\{W_n:n\geq 1\}$, we define  right invariant subspaces
  $\mathfrak M_{\mathcal W}^1=\oplus_{n\geq 1}^{col}W_n H^1$ in $ L^1(\mathcal M)$ and $\mathfrak M_{\mathcal W}^{\infty}=\oplus_{n\geq 1}^{col}W_n H^{\infty}$ in $\mathcal M$ respectively.
 
\begin{theorem}
Let   $\mathfrak A$ be a type 1 subdiagonal algebra  with respect to $\Phi$.   
   
   $(1)$ If  $\mathfrak M\subseteq L^1(\mathcal M)$   is a closed  right invariant subspace, then there exist a family of column orthogonal partial isometries $\mathcal W$  and a projection $E$ in $\mathcal M$ and such that
   $\mathfrak M=\mathfrak M_{\mathcal W}^1\oplus^{col}EL^1(\mathcal M)$.

   $(2)$ If  $\mathfrak M\subseteq \mathcal M$   is a $\sigma$-weakly closed  right invariant subspace, then there exist a family of column orthogonal partial isometries $\mathcal W$  and a projection $E$ in $\mathcal M$ and such that
   $\mathfrak M=\mathfrak M_{\mathcal W}^{\infty}\oplus^{col}E\mathcal M$.

\end{theorem}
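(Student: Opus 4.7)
The plan is to reduce both parts to the $L^2$ Beurling-type decomposition recalled before the statement (from \cite{ble4,lab,jig2}) by transporting right $\mathfrak A$-invariant subspaces between $L^p$-spaces via right multiplication by $h_0^{1/2}$.

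For part (2), I begin with a $\sigma$-weakly closed right $\mathfrak A$-invariant $\mathfrak M \subseteq \mathcal M$ and set $\mathfrak N = [\mathfrak M h_0^{1/2}]_2 \subseteq L^2(\mathcal M)$. Maximality of $\mathfrak A$ implies $\sigma_t^{\varphi}$-invariance, and the KMS relation $h_0^{1/2} a = \sigma_{-i/2}^{\varphi}(a) h_0^{1/2}$ (for $\sigma$-analytic $a \in \mathfrak A$) shows that $\mathfrak N$ is right $\mathfrak A$-invariant in $L^2$. Writing the $L^2$ decomposition $\mathfrak N = \mathfrak N_1 \oplus^{col} \mathfrak N_2$, the type 1 hypothesis on $\mathfrak A$ together with the Beurling representation gives $\mathfrak N_1 = \oplus_n^{col} W_n H^2$ for some column orthogonal family $\mathcal W = \{W_n\}$. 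For $\mathfrak N_2$, the vanishing of the wandering subspace yields $\mathfrak N_2 = [\mathfrak N_2 \mathfrak A_0^k]_2$ for every $k$, which combined with the $\sigma$-weak density of $\mathfrak A + \mathfrak A^*$ in $\mathcal M$ forces $\mathfrak N_2$ to be right $\mathcal M$-invariant; hence $\mathfrak N_2 = EL^2(\mathcal M)$ for some projection $E \in \mathcal M$. Returning to $\mathcal M$ via the identification $\mathfrak M = \{x \in \mathcal M : xh_0^{1/2} \in \mathfrak N\}$, and using $[W_n \mathfrak A h_0^{1/2}]_2 = W_n H^2$ (with the analogous identification for $E\mathcal M$), yields $\mathfrak M = \mathfrak M_{\mathcal W}^{\infty} \oplus^{col} E\mathcal M$; column orthogonality in $\mathcal M$ is inherited from the $L^2$ identities $W_n^* W_m = 0$ and $W_n^* E = 0$.

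Part (1) follows the same pattern: for closed right $\mathfrak A$-invariant $\mathfrak M \subseteq L^1(\mathcal M)$, I introduce an associated right $\mathfrak A$-invariant $\mathfrak N \subseteq L^2(\mathcal M)$ through a right multiplication by $h_0^{1/2}$ correspondence (via $L^1 = [L^2 h_0^{1/2}]_1$), apply the $L^2$ decomposition, and transport back using $[W_n H^2 \cdot h_0^{1/2}]_1 = W_n H^1$ and $[EL^2 \cdot h_0^{1/2}]_1 = EL^1(\mathcal M)$. Alternatively, (1) may be deduced from (2) by duality: the preannihilator of $\mathfrak M$ under the pairing $(x,y) \mapsto \mbox{tr}(xy)$ is a $\sigma$-weakly closed left $\mathfrak A$-invariant subspace of $\mathcal M$, to which the left (row) analogue of (2) applies, and taking annihilators transports the decomposition back to $L^1$.

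The main obstacle will be rigorously justifying the correspondence $\mathfrak M \leftrightarrow [\mathfrak M h_0^{1/2}]_2$ between right $\mathfrak A$-invariant subspaces, since $h_0^{1/2}$ (and its formal inverse) are unbounded: one must carefully verify that closures agree, that right $\mathfrak A$-invariance is preserved, and that the correspondence respects the column decomposition. A secondary but nontrivial point is verifying column orthogonality of the Beurling and $EL^p$-summands in the non-Hilbert settings of $L^1$ and $\mathcal M$, which should follow from the $L^2$-orthogonality via H\"older-type density arguments.
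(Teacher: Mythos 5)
Your overall strategy for part (2) --- pass to $[\mathfrak M h_0^{1/2}]_2$, decompose in $L^2$, and pull back --- matches the first half of the paper's argument, and the easy inclusion $\mathfrak M\subseteq \mathfrak M_{\mathcal W}^{\infty}\oplus^{col}E\mathcal M$ is obtained essentially as you describe (from $W_n^*xh_0^{1/2}\in H^2$ one gets $W_n^*x\in\mathfrak A$). But the step you defer as ``the main obstacle'' --- the identification $\mathfrak M=\{x\in\mathcal M: xh_0^{1/2}\in[\mathfrak M h_0^{1/2}]_2\}$ --- is precisely the content of the theorem in the $p=\infty$ case, and you give no argument for the reverse inclusion. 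The paper closes this gap by a duality computation: it shows $\mathfrak M^{\bot}=H^2\,(\mathfrak M H^2)^{\bot}$ for any such $\mathfrak M$, applies the same identity to the candidate $\mathfrak M_{\mathcal W}^{\infty}+E\mathcal M$, and concludes that the two subspaces have equal preannihilators. Crucially, this computation applies the (row version of the) $L^1$ statement (1) to $\mathfrak M^{\bot}\subseteq L^1(\mathcal M)$, so in the paper's logic (2) depends on (1); your suggested alternative of deducing (1) from (2) by duality is therefore circular, and in your own setup it rests on the same unproved identification.

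Part (1) is where the real work lies, and your sketch does not engage with it. There is no ``right multiplication by $h_0^{1/2}$ correspondence'' between closed right invariant subspaces of $L^1(\mathcal M)$ and of $L^2(\mathcal M)$: a closed right invariant $\mathfrak M\subseteq L^1(\mathcal M)$ is not given to you as $[\mathfrak N h_0^{1/2}]_1$ for a closed right invariant $\mathfrak N\subseteq L^2(\mathcal M)$, and the candidate $\{y\in L^2(\mathcal M): yh_0^{1/2}\in\mathfrak M\}$ need be neither closed nor recover $\mathfrak M$ after multiplying back. The paper instead (i) extracts the reducing part directly, showing that $\mathfrak M_2=\cap_{n\geq1}[\mathfrak M\mathfrak A_0^n]_1$ is right $\mathcal M$-invariant --- using the type 1 structure $\mathfrak A_0=\vee\{\mathfrak D U_{i_1}\cdots U_{i_n}\}$ together with $U_nU_m^*\in\mathfrak D$, not merely the $\sigma$-weak density of $\mathfrak A+\mathfrak A^*$ --- hence $\mathfrak M_2=EL^1(\mathcal M)$; and (ii) handles the complement $\mathfrak M_1=(I-E)\mathfrak M$ by a Zorn's lemma argument over families $\mathcal W$ with $\mathfrak M_{\mathcal W}^2H^2\subseteq\mathfrak M_1$, where the decisive tool is the factorization $h=h_1h_2$ with $h_1\in L^2(\mathcal M)$ and $h_2\in H^2$ outer, from \cite[Theorem 3.1]{jig2}: this is what actually transports an element of the $L^1$ subspace into a right invariant subspace of $L^2(\mathcal M)$, to which the Beurling decomposition applies. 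That factorization theorem is the missing idea in your proposal; without it (or the duality route, which is unavailable to you for the reasons above) the $L^1$ case does not reduce to the $L^2$ Beurling theorem.
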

\begin{proof}

$(1)$ 
 Put $\mathfrak M_2=\cap_{n\geq 1}[\mathfrak M\mathfrak A_0^n]_1$, where  $\mathfrak A_0^n$ is  the  $\sigma$-weakly closed ideal of $\mathfrak A$  generated by $\{a_1a_2\cdots a_n:a_j\in\mathfrak A_0\}$. Then $\mathfrak M_2\subseteq \mathfrak M$ is a right invariant subspace of  $\mathfrak A$.  We show that $\mathfrak M_2=EL^1(\mathcal M)$ for a projection $E\in\mathcal M$.
 By \cite[Theorem 2.7]{jig2},
$\mathfrak A_0=\vee\{\mathfrak DU_{i_1}U_{i_2}\cdots U_{i_n}:i_j\geq 1\}$, where $\{U_i:i\geq 1\}$ is a family of column orthogonal partial isometries as in $(2.1)$.
 Note that $\mathfrak M$ is  $\mathfrak D$ right  invariant  and   $[\mathfrak M_2\mathfrak A_0]_1=\mathfrak M_2$.  Since
 $\vee\{\mathfrak M_2U_n:n\geq1\}\supseteq \vee\{\mathfrak M_2U_{i_1}U_{i_2}\cdots U_{i_n}:i_k\geq 1,n\geq1\}$, $\mathfrak M_2=\vee\{\mathfrak M_2 U_n:n\geq1\}$.  For any $m,n \geq 1$ and $x\in \mathfrak M_2$, we have $R_{U_m^*}R_{U_n}x=xU_nU_m^*\in \mathfrak M_2$ since $U_nU_m^*\in\mathfrak D$ from \cite[Proposition 2.3]{jig2}.   Then $\mathfrak M$ is right $\mathcal M$ invariant.  By \cite[Chapter III, Theorem 2.7]{tak},
 there is a projection $E\in\mathcal M$ such that $\mathfrak M_2=EL^1(\mathcal M)$.
   Put $\mathfrak M_1=(I-E)\mathfrak M\subseteq \mathfrak M$. This mean that $\mathfrak M_1$ is  closed and right invariant
 such that $\mathfrak M=\mathfrak M_1\oplus^{col}\mathfrak M_2$. Note that  $\cap_{n\geq 1}[\mathfrak M_1\mathfrak A_0^n]_1\subseteq \cap_{n\geq 1}[\mathfrak M\mathfrak A_0^n]_1\subseteq\mathfrak M_1\cap\mathfrak M_2=\{0\}$.

 Put 
 \begin{align*}\  \mathcal P =  &\{\mathcal W=\{W_n\}_{n\geq 1}\subseteq\mathcal M: 
  \mbox{column orthogonal  }  \\
  &  \mbox{ partial isometries such that }  \mathfrak M_{\mathcal W}^2 H^2\subseteq \mathfrak M_1  \},\end{align*}  where $\mathfrak M_{\mathcal W}^2 H^2$ is the closed right invariant subspace generated by $\{xy: x\in \mathfrak M_{\mathcal W}^2,y\in  H^2\}$.  We define a partial order in $\mathcal P$ by $\mathcal W\leq \mathcal V$ if $\mathfrak M_{\mathcal W}^2\subseteq \mathfrak M_{\mathcal V}^2$ for any $\mathcal W, \mathcal V\in \mathcal P$. Let $\{\mathcal W_{\lambda}:\lambda \in\Lambda\}\subseteq \mathcal P$ be a totally ordered family in $\mathcal P$. Put $\mathfrak N=\vee\{\mathfrak M_{\mathcal W_{\lambda}}^2: \lambda\in\Lambda\}$. Note that $\mathfrak N\subseteq  L^2(\mathcal M)$ is a right invariant subspace in $L^2(\mathcal M)$. Then  $\mathfrak N=\mathfrak M_{\mathcal W}^2\oplus^{col} \mathfrak N_2$ for a family of column orthogonal partial isometries $\mathcal W$ and a right invariant subspace $\mathfrak N_2$ of type 2 in $L^2(\mathcal M)$.  Then $[\mathfrak N_2 H^2]_1$ is also a  right invariant subspace in $ L^1(\mathcal M)$ such that $[\mathfrak N_2 H^2\mathfrak A_0]_1= [\mathfrak N_2 H^2]_1$.
 This implies that $[\mathfrak N_2 H^2]_1=\cap_{n\geq 1} [(\mathfrak N_2 H^2)\mathfrak A_0^n]_1
  \subseteq \cap_{n\geq 1}[\mathfrak M_1\mathfrak A_0^n]_1=\{0\}$.   Hence $\mathfrak N_2=\{0\}$.
 Since $\mathfrak M_{\mathcal W_{\lambda}}^2 H^2\subseteq \mathfrak M_1$, $\mathfrak N H^2\subseteq \mathfrak M_1$. Thus
  $\mathcal W\in\mathcal P$ with $\mathcal W_{\lambda}\leq \mathcal W$ for any $\lambda\in \Lambda$. That is, $\mathcal W$ is an upper bound of $\{\mathcal W_{\lambda}:\lambda\in\Lambda\}$. By Zorn's lemma, there exists a maximal element $\mathcal W\in \mathcal P$. We show  that $\mathfrak M_1=\mathfrak M_{\mathcal W}^1$.

  Otherwise, assume that  there is an $h\in \mathfrak M_1$ such that $h\notin\mathfrak M_{\mathcal W}^1$. Then by
  considering the polar decomposition of $h$,  there are  $h_1\in  L^2(\mathcal M)$ and an outer element $h_2\in  H^2$ such that  $h=h_1h_2$ by \cite[Theorem 3.1]{jig2}. This implies that $\mathfrak M_1\supseteq [h\mathfrak A]_1=[h_1\mathfrak A]_2 H^2$.
    Note that $[h_1\mathfrak A]_2\subseteq  L^2(\mathcal M)$ is a right invariant subspace. By \cite[Lemma 3.3]{jig2},
      $[h_1\mathfrak A]_2=V H^2\oplus^{col} N_2$ for a partial isometry $V\in\mathcal M$ such that $V^*V\in\mathfrak D$
      and a  right invariant subspace $N_2$ of type 2.  Note that  $[h_1\mathfrak A]_2 H^2\subseteq \mathfrak M_1$.
       Then $N_2=0$.

     Since $h=h_1h_2\notin \mathfrak M_{\mathcal W}^1$, $V H^2\nsubseteq\mathfrak M_{\mathcal W}^2$. Now
$\tilde{\mathfrak N}=\vee\{\mathfrak M_{\mathcal W}^2,V H^2\}$ is also a right invariant subspace in $ L^2(\mathcal M)$ with $\tilde{\mathfrak N} H^2\subseteq \mathfrak M_1$.  In this case, we have $\tilde{\mathfrak N}=\mathfrak M_{\mathcal V}^2$ for a family of  column orthogonal partial isometries $\mathcal V\in\mathcal P$. It is trivial  that $\mathfrak M_{\mathcal W}^2\subsetneqq\mathfrak M_{\mathcal V}^2$ by a similar treatment. This is a contradiction. Thus
$\mathfrak M_1=\mathfrak M_{\mathcal W}^2H^2=\mathfrak M_{\mathcal W}^1$.

 $(2)$ Let $p=\infty$.   Note that  $[\mathfrak M h_0^{\frac12}]_2=\mathfrak M H^2\subseteq L^2(\mathcal M)$ is a right invariant invariant subspace of $\mathfrak A$ in $L^2(\mathcal M)$  since $[\mathfrak M\mathfrak A]_{\infty}=\mathfrak M$.
 Thus there are  a family of column orthogonal partial isometries $\mathcal W=\{W_n:n\geq 1\}$ and a projection $E$ in $\mathcal M$  such that
 $\mathfrak MH^2=M_{\mathcal W}^2\oplus^{col} EL^2(\mathcal M)=\left(\mathfrak M_{\mathcal W}^{\infty}\oplus^{col}E\mathcal M\right)H^2$.

 We next show that $\mathfrak M=\mathfrak M_{\mathcal W}^{\infty}\oplus^{col}E\mathcal M$. For any $x\in\mathfrak M$, it is elementary
 that $xh_0^{\frac12}=\oplus^{col}_{n\geq 1}W_nW_n^*xh_0^{\frac12}+Exh_0^{\frac12}$ with $W_n^*xh_0^{\frac12}\in H^2$. Then $W_n^*x\in \mathfrak A$ and
 $x=\oplus^{col}_{n\geq 1}W_nW_n^*x\oplus Ex\in \mathfrak M_{\mathcal W}^{\infty}\oplus E\mathcal M$. Therefore $\mathfrak M\subseteq \mathfrak M_{\mathcal W}^{\infty}\oplus E\mathcal M$.

 For any closed subspace $K\subseteq L^p(\mathcal M)$, we put $K^{\bot}=\{y\in L^q(\mathcal M):tr(xy)=0,\forall x\in K\}$, where $p$ and $q$ are conjugate exponents, that is, $\frac1p+\frac1q=1$.
 It is known that $K^{\bot}$ is  left invariant when $K$ is right invariant. We easily have $\mathfrak M^{\bot}\subseteq L^1(\mathcal M)$ is a left invariant subspace. As just proved,  by symmetry, we have that
 $\mathfrak M^{\bot}=\oplus_{n\geq 1}^{row} H^1V_n\oplus ^{row} L^1(\mathcal M)F$  for a family of row orthogonal partial family $\{V_n:n\geq 1\}$ and a projection $F\in\mathcal M$.  It is elementary that $\mathfrak M^{\bot}=H^2\left(\oplus_{n\geq 1}^{row} H^2V_n\oplus ^{row} L^2(\mathcal M)F\right)=H^2\mathfrak N$, where   $\mathfrak N=\oplus_{n\geq 1}^{row} H^2V_n\oplus ^{row} L^2(\mathcal M)F\subseteq L^2(\mathcal M)$ is left invariant.

  We claim that $(\mathfrak MH^2)^{\bot}=\mathfrak N$. It is clear that $(\mathfrak MH^2)^{\bot}\supseteq \mathfrak N$. Take any $y\in(\mathfrak MH^2)^{\bot}$. Then $H^2y\subseteq \mathfrak M^{\bot}=\oplus_{n\geq 1}^{row} H^1V_n\oplus ^{row} L^1(\mathcal M)F$.
 By a similar treatment as above, we have
 $h_0^{\frac12}y=\oplus_{n\geq 1}^{row}h_0^{\frac12}yV_n^*V_n \oplus^{row}h_0^{\frac1r}yF$ and thus $y=\oplus_{n\geq 1}^{row}yV_n^*V_n \oplus^{row}yF\in \mathfrak N$.
Thus,  $\mathfrak M^{\bot}=H^2\mathfrak N=H^2(\mathfrak MH^2)^{\bot}$.
 On the other hand,   replacing $\mathfrak M$ by $\mathfrak M_{\mathcal W}^{\infty}+E\mathcal M$, we have   
    $ \left(\mathfrak M_{\mathcal M}^{\infty}+E\mathcal M\right)^{\bot}=H^2\left((\mathfrak M_{\mathcal W}^{\infty}+E\mathcal M)H^2\right)^{\bot}=
     H^2(\mathfrak MH^2)^{\bot}=\mathfrak M^{\bot}$.
 It follows that  $\mathfrak M=\mathfrak M_{\mathcal W}^{\infty}+E\mathcal M$.
\end{proof}

We  now consider the maximality of a type 1 subdiagonal algebra  $\mathfrak A$ as a $\sigma$-weakly closed subalgebra in a von Neumann algebra $\mathcal M$. We next assume that $\mathcal B$ is  a $\sigma$-weakly closed proper subalgebra of $\mathcal M$ such that $ \mathfrak A\subseteq \mathcal B \varsubsetneq\mathcal M$.

\begin{proposition}$\mathcal B=\{T\in\mathcal M:[h_0^{\frac12}\mathcal B]_2T\subseteq [ h_0^{\frac12}\mathcal B]_2\}=\{T\in\mathcal M:T[\mathcal Bh_0^{\frac12}]_2\subseteq [ \mathcal Bh_0^{\frac12}]_2\}$.
\end{proposition}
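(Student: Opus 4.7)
The inclusion $\mathcal B \subseteq \{T \in \mathcal M : [h_0^{1/2}\mathcal B]_2 T \subseteq [h_0^{1/2}\mathcal B]_2\}$ is immediate from the algebra property of $\mathcal B$: for $T \in \mathcal B$ one has $h_0^{1/2}\mathcal B T \subseteq h_0^{1/2}\mathcal B$, and the boundedness of the right-multiplication operator $R_T$ on $L^2(\mathcal M)$ propagates the inclusion to the $L^2$-closure. The analogous reasoning handles the second equality.

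For the nontrivial inclusion, set $\tilde{\mathcal B} := \{T \in \mathcal M : [h_0^{1/2}\mathcal B]_2 T \subseteq [h_0^{1/2}\mathcal B]_2\}$. I would first verify that $\tilde{\mathcal B}$ is a $\sigma$-weakly closed subalgebra of $\mathcal M$ containing $\mathcal B$, hence containing $\mathfrak A$: the algebra property is immediate from the definition, and $\sigma$-weak closure follows because $T_\lambda \to T$ $\sigma$-weakly in $\mathcal M$ implies $\xi T_\lambda \to \xi T$ weakly in $L^2(\mathcal M)$ for each $\xi$, together with the fact that the norm-closed convex subspace $[h_0^{1/2}\mathcal B]_2$ is weakly closed in $L^2$. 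Setting $b = I \in \mathcal B$ in the invariance condition then yields $h_0^{1/2}\tilde{\mathcal B} \subseteq [h_0^{1/2}\mathcal B]_2$, and combined with $\mathcal B \subseteq \tilde{\mathcal B}$ this gives $[h_0^{1/2}\mathcal B]_2 = [h_0^{1/2}\tilde{\mathcal B}]_2$.

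The main obstacle is then a recovery step: whenever $T \in \mathcal M$ satisfies $h_0^{1/2}T \in [h_0^{1/2}\mathcal B]_2$, one must conclude $T \in \mathcal B$, which is the noncommutative analogue of the identity $H^\infty = H^2 \cap L^\infty$. I would realize this via $L^1$-duality: since $\mathcal B = (\mathcal B^\bot)^\bot$, it suffices to show $\mbox{tr}(fT) = 0$ for every $f \in \mathcal B^\bot \subseteq L^1(\mathcal M)$. For each $b \in \mathcal B$ and $\eta \in [h_0^{1/2}\mathcal B]_2^\bot \subseteq L^2(\mathcal M)$, the element $\eta^* h_0^{1/2}b$ belongs to $\mathcal B^\bot$ (since $\mbox{tr}(\eta^* h_0^{1/2}b b') = \langle h_0^{1/2}bb',\eta\rangle = 0$ for every $b' \in \mathcal B$), and the defining invariance of $\tilde{\mathcal B}$ gives $\mbox{tr}(\eta^* h_0^{1/2}bT) = \langle h_0^{1/2}bT,\eta\rangle = 0$. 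The crucial density claim is then that the $L^1$-closed linear span of $\{\eta^* h_0^{1/2}b : b \in \mathcal B,\ \eta \in [h_0^{1/2}\mathcal B]_2^\bot\}$ coincides with $\mathcal B^\bot$; I would establish this by invoking the Beurling decomposition $\mathcal B = \oplus^{col}_n W_n H^\infty \oplus^{col} E\mathcal M$ from Theorem 2.2(2) together with the analogous $L^1$-decomposition of $\mathcal B^\bot$ supplied by the left-invariant version of Theorem 2.2(1), reducing the density to each column summand $W_n H^\infty$ and the piece $E\mathcal M$, where it follows from standard noncommutative $H^p$-density results. The second equality of the proposition then follows by the symmetric argument applied to the left-invariant, row-orthogonal version of Theorem 2.2(2).
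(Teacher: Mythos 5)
Your easy inclusion and the duality setup ($\mathcal B=(\mathcal B^{\bot})^{\bot}$, so it suffices to show $\mbox{tr}(fT)=0$ for every $f\in\mathcal B^{\bot}$) match the paper, but there is a genuine gap at exactly the point you flag as ``the crucial density claim''. By the bipolar theorem that claim is not merely the main obstacle, it is \emph{equivalent} to the proposition: the annihilator in $\mathcal M$ of $\{\eta^{*}h_0^{1/2}b:\ b\in\mathcal B,\ \eta\perp[h_0^{\frac12}\mathcal B]_2\}$ is precisely $\tilde{\mathcal B}$ (the condition $\mbox{tr}(\eta^{*}h_0^{\frac12}bS)=\langle h_0^{\frac12}bS,\eta\rangle=0$ for all $\eta,b$ says exactly $[h_0^{\frac12}\mathcal B]_2S\subseteq[h_0^{\frac12}\mathcal B]_2$), so the $L^1$-closed span of that set is the pre-annihilator of $\tilde{\mathcal B}$, and asserting that it equals $\mathcal B^{\bot}$ is asserting $\tilde{\mathcal B}=\mathcal B$. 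Deferring this to ``standard $H^p$-density results'' after a Beurling decomposition therefore leaves the entire content of the proposition unproved. Moreover, the decomposition you propose is the wrong one: the left-invariant/row version of Theorem 2.2 represents $\mathcal B^{\bot}$ as $\oplus_{m}^{row}H^1V_m$, which places the $H^2$-type factor on the left, whereas your factorization $f=\eta^{*}(h_0^{\frac12}b)$ needs a factor lying in $[h_0^{\frac12}\mathcal B]_2$ on the \emph{right}.

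What the paper does instead (and what would close your gap) is to apply Theorem 2.2(1) to the two-sided invariant subspace $\mathcal B^{\bot}\subseteq H_0^1$ as a \emph{right} invariant subspace, so that the reducing summand vanishes and $\mathcal B^{\bot}=\mathfrak M_{\mathcal W}^{1}=\mathfrak M_{\mathcal W}^{2}H^{2}$ with an $H^2$ factor on the right. For $f=h_1h_2$ with $h_1\in\mathfrak M_{\mathcal W}^{2}$ and $h_2\in H^2\subseteq[h_0^{\frac12}\mathcal B]_2$, cyclicity of the trace gives $\mbox{tr}(Th_1h_2)=\mbox{tr}(h_2Th_1)$; the hypothesis on $T$ gives $h_2T=\lim_m h_0^{\frac12}B_m$ with $B_m\in\mathcal B$, and $\mbox{tr}(h_0^{\frac12}B_mh_1)=\mbox{tr}(B_mh_1h_0^{\frac12})=0$ because $h_1h_0^{\frac12}\in\mathfrak M_{\mathcal W}^{2}H^{2}=\mathcal B^{\bot}$. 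If you reroute your density claim through this column decomposition of $\mathcal B^{\bot}$ (rather than through a decomposition of $\mathcal B$ together with the row decomposition of $\mathcal B^{\bot}$), your scheme does go through; as written, however, the key step is circular.
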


\begin{proof}
As in the proof of Theorem 2.2, we put $\mathcal B^{\bot}=\{h\in  L^1(\mathcal M): \mbox{tr}(hB)=0,\forall B\in\mathcal B\}$ is the pre-annihilator of $\mathcal B$. It is known that $(\mathcal B^{\bot})^{\bot}=\mathcal B$ since $\mathcal B$ is $\sigma$-weakly closed. Note that $\mathcal B^{\bot}\subseteq  H^1_0$ is a two-side $\mathcal B\supseteq \mathfrak A$ invariant subspace. By Theorem  2.2,     $\mathcal B^{\bot}=\mathfrak M_{\mathcal W}^1=\mathfrak M_{\mathcal W}^2H^2$ for a family of column orthogonal partial isometries $\mathcal W$ since $[\mathcal B^{\bot}\mathfrak A_0^n]_1\subseteq [H_0^1\mathfrak A_0^n]_1=\{0\}$. Take a $T\in\mathcal M$ such that $[ h_0^{\frac12}\mathcal B]_2T\subseteq [ h_0^{\frac12}\mathcal B]_2$. For any $h_1\in\mathfrak M_{\mathcal W}^2$ and $h_2\in H^2\subseteq [h_0^{\frac12}\mathcal B]_2$, $h_2T\in[h_0^{\frac12}\mathcal B]_2$ and then $h_2T=\lim\limits_{m\to\infty}h_0^{\frac12}B_m$ for a sequence $\{B_m:m\geq 1\}\subseteq \mathcal B$. Now
$\mbox{tr}(Th_1h_2)=\mbox{tr}(h_2 Th_1)=\lim\limits_{m\to \infty}\mbox{tr}(h_0^{\frac12}B_mh_1)=\lim\limits_{m\to \infty}\mbox{tr}(B_mh_1h_0^{\frac12})=0$.  It follows that $T\in \left(\mathcal B^{\bot}\right)^{\bot}=\mathcal B$. By symmetry, we also have the second equality.
\end{proof}

\begin{corollary} Put $Q=\vee \{E\in\mathcal M:E\mathcal M\subseteq \mathcal B\}$. Then $(I-Q)\mathcal BQ=0$. \end{corollary}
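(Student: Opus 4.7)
The plan is to recognize $\mathcal J=\{X\in\mathcal M:X\mathcal M\subseteq\mathcal B\}$ as a $\sigma$-weakly closed right ideal of $\mathcal M$ and then read off $Q$ from the standard support-projection theorem for such ideals. For $X\in\mathcal J$ and $Y\in\mathcal M$ one has $(XY)\mathcal M=X(Y\mathcal M)\subseteq X\mathcal M\subseteq\mathcal B$, so $XY\in\mathcal J$; $\sigma$-weak closedness follows from $\sigma$-weak continuity of right multiplication combined with the $\sigma$-weak closedness of $\mathcal B$. Consequently $\mathcal J=P\mathcal M$ for a unique projection $P\in\mathcal M$.

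I would then identify $P$ with $Q$. The projection $P$ itself lies in $\mathcal J$, so $P$ is one of the projections defining $Q$, giving $P\leq Q$. Conversely, any projection $E$ with $E\mathcal M\subseteq\mathcal B$ belongs to $\mathcal J=P\mathcal M$, so $E=PE\leq P$; taking the supremum yields $Q\leq P$. Hence $Q=P$, and in particular $Q\in\mathcal B$ and $Q\mathcal M\subseteq\mathcal B$.

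For the conclusion, fix $B\in\mathcal B$. Since $I\in\mathfrak A\subseteq\mathcal B$ and $Q\in\mathcal B$, we have $I-Q\in\mathcal B$, and using that $\mathcal B$ is a subalgebra together with $Q\mathcal M\subseteq\mathcal B$,
$$(I-Q)BQ\cdot\mathcal M=(I-Q)B(Q\mathcal M)\subseteq(I-Q)B\mathcal B\subseteq\mathcal B,$$
so $(I-Q)BQ\in\mathcal J=Q\mathcal M$. Writing $(I-Q)BQ=QZ$ for some $Z\in\mathcal M$ and multiplying on the left by $Q$ gives
$$0=Q(I-Q)BQ=Q\cdot QZ=QZ=(I-Q)BQ,$$
whence $(I-Q)\mathcal BQ=\{0\}$.

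There is no real obstacle once the right-ideal structure of $\mathcal J$ is noticed: both the identification of $Q$ as the support projection of $\mathcal J$ and the vanishing of $(I-Q)\mathcal BQ$ reduce to short ideal manipulations. The only point requiring care is observing that $Q$ itself belongs to $\mathcal B$ (so that $I-Q\in\mathcal B$), which is automatic from $Q\in\mathcal J\subseteq\mathcal B$ rather than merely from $Q$ being a supremum of projections in $\mathcal B$.
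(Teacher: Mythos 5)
Your proof is correct, but it takes a genuinely different route from the paper. The paper works in $L^2(\mathcal M)$: it forms the right-reducing subspace $\mathcal N=[(I-Q)\mathcal BQ\mathcal Mh_0^{\frac12}]_2\subseteq[\mathcal Bh_0^{\frac12}]_2$, writes $\mathcal N=FL^2(\mathcal M)$ for a projection $F$, invokes Proposition 2.3 to conclude $F\mathcal M\subseteq\mathcal B$ and hence $F\leq Q$, and then notes $(I-Q)F=F$ forces $F=0$. You instead stay entirely inside $\mathcal M$: you observe that $\mathcal J=\{X\in\mathcal M:X\mathcal M\subseteq\mathcal B\}$ is a $\sigma$-weakly closed right ideal, identify $Q$ with its support projection via the standard structure theorem for such ideals, and then show $(I-Q)BQ\in\mathcal J=Q\mathcal M$ by a short ideal computation, which kills it upon left multiplication by $Q$. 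Each step checks out: $\mathcal J$ is indeed a right ideal closed in the $\sigma$-weak topology, $P=Q$ follows from the two inequalities you give, and the containment $(I-Q)BQ\mathcal M\subseteq\mathcal B$ uses only $Q\mathcal M\subseteq\mathcal B$, $B\mathcal B\subseteq\mathcal B$ and $I-Q\in\mathcal B$. Your argument is more elementary (it bypasses Proposition 2.3 and the $L^2$-machinery altogether) and has the side benefit of actually justifying the assertion $Q\mathcal M\subseteq\mathcal B$, which the paper states without proof at the start of its argument; the paper's approach, on the other hand, fits the corollary into the invariant-subspace framework it uses for the maximality theorem that follows.
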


\begin{proof}

 Note that $Q\mathcal M\subseteq \mathcal B$.  Put $\mathcal N=[(I-Q)\mathcal BQ\mathcal Mh_0^{\frac12}]_2\subseteq [\mathcal Bh_0^{\frac12}]_2$. Then $\mathcal N$ is right reducing and $\mathcal N=FL^2(\mathcal M)$ for a projection $F\in\mathcal M$.
 It follows that $F\mathcal M\subseteq \mathcal B$  by Proposition 2.3 since $F\mathcal M[\mathcal Bh_0^{\frac12}]_2\subseteq \mathcal N\subseteq [\mathcal Bh_0^{\frac12}]_2$ and thus $F\leq Q$. However, $(I-Q)F=F$.  Hence $F=0$ and $(I-Q)\mathcal BQ=0$.
 \end{proof}

   We now consider main result in this section.  If $\mathfrak M\subseteq L^2(\mathcal M)$ is a type 1 right invariant subspace of $\mathfrak A$, then the projection  on the wandering subspace $K=\mathfrak M\ominus [\mathfrak M\mathfrak A_0]_2$ is in the commutant $(R(\mathfrak D))^{\prime}$ of  the von Neumann algebra $R(\mathfrak D)=\{R_D:D\in\mathfrak D\}$. For any two projections $E$ and $F$ in a von Neumann algebra $\mathcal M$, $E\preccurlyeq F$ means that there is a partial isometry $V\in\mathcal M$ such that $V^*V=E$ and $VV^*\leq F$. If $E\preccurlyeq F$ and $F\preccurlyeq E$, then  $E\sim F$. The following proposition holds for a maximal subdiagonal algebra.

\begin{proposition}
 Let $\mathfrak M_i(i=1,2)$ be two type 1 right invariant subspaces of $\mathfrak A$ with wandering subspaces $ K_i(i=1,2)$. if  $p_i$ are  the projections on $ K_i(i=1,2) $ and  $p_1\preccurlyeq p_2$ in $(R(\mathfrak D))^{\prime}$, then there is a partial isometry $W\in\mathcal M$ such that $\mathfrak M_2=W\mathfrak M_1\oplus^{col} (I-WW^*)\mathfrak M_2$  and
  $\mathfrak M_1=W^*\mathfrak M_2$.\end{proposition}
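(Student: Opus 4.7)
The plan is to use the Beurling-type representation to realize both $\mathfrak M_i$ as ``columns'' of $H^2$ and then convert the abstract comparison $p_1\preccurlyeq p_2$ in $(R(\mathfrak D))'$ into an explicit element $W\in\mathcal M$. By the type 1 structure theorem recalled in the introduction one writes $\mathfrak M_i=\oplus_{n}^{col}U_n^{(i)}H^2$ for column orthogonal families $\{U_n^{(i)}:n\geq 1\}$ in $\mathcal M$. Using the orthogonal decomposition $H^2=[h_0^{1/2}\mathfrak D]_2\oplus H_0^2$ (which follows from $\varphi\circ\Phi(aD^*)=0$ for $a\in\mathfrak A_0,D\in\mathfrak D$), the wandering subspace and its projection become
\[
K_i=\oplus_{n}\,[U_n^{(i)}h_0^{1/2}\mathfrak D]_2,\qquad p_i=\sum_n L_{U_n^{(i)}}\,e\,L_{U_n^{(i)}}^{*},
\]
where $e\in(R(\mathfrak D))'$ is the projection onto $[h_0^{1/2}\mathfrak D]_2$.

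Next I would use the partial isometry $v\in(R(\mathfrak D))'$ with $v^{*}v=p_1$ and $vv^{*}\leq p_2$ to extract ``matrix entries'' in $\mathfrak D$. For each pair $(m,n)$ the compression $e\,L_{U_m^{(2)}}^{*}\,v\,L_{U_n^{(1)}}\,e$ is an operator on $[h_0^{1/2}\mathfrak D]_2$ that commutes with $R(\mathfrak D)$ and maps $E_n^{(1)}[h_0^{1/2}\mathfrak D]_2$ into $E_m^{(2)}[h_0^{1/2}\mathfrak D]_2$, where $E_n^{(i)}=U_n^{(i)*}U_n^{(i)}\in\mathfrak D$. Since $[h_0^{1/2}\mathfrak D]_2$ is the standard $L^2$-space of $\mathfrak D$ with $R(\mathfrak D)$ acting on the right, its $R(\mathfrak D)$-commutant inside $B([h_0^{1/2}\mathfrak D]_2)$ is exactly $L(\mathfrak D)$; the compression therefore equals $L_{D_{mn}}\,e$ for a unique $D_{mn}\in E_m^{(2)}\mathfrak D E_n^{(1)}$.

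Set $W=\sum_m\sum_n U_m^{(2)}D_{mn}U_n^{(1)*}$, understood as a $\sigma$-strong limit in $\mathcal M$. A direct expansion using column orthogonality gives $W^{*}W=\sum_{n,n'}U_n^{(1)}(\sum_m D_{mn}^{*}D_{mn'})U_{n'}^{(1)*}$, and the relation $v^{*}v=p_1$ translates, via the identifications of the previous paragraph, into $\sum_m D_{mn}^{*}D_{mn'}=\delta_{nn'}E_n^{(1)}$; hence $W^{*}W=\sum_n U_n^{(1)}U_n^{(1)*}$, the range projection of $\mathfrak M_1$. A symmetric computation from $vv^{*}\leq p_2$ yields $WW^{*}\leq\sum_m U_m^{(2)}U_m^{(2)*}$, so $W\in\mathcal M$ is a partial isometry and $L_W$ agrees with $v$ on $K_1$. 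Right-multiplying by $\mathfrak A$ and using the type 1 identity $\mathfrak M_i=[K_i\mathfrak A]_2$ gives $W\mathfrak M_1=WW^{*}\mathfrak M_2\subseteq\mathfrak M_2$ and $W^{*}\mathfrak M_2=\mathfrak M_1$, and the decomposition $\mathfrak M_2=W\mathfrak M_1\oplus^{col}(I-WW^{*})\mathfrak M_2$ follows by complementation. The main obstacle is the identification carried out in the second paragraph: extracting $D_{mn}\in\mathfrak D$ from the abstract partial isometry $v$ depends on the fact that $e(R(\mathfrak D))'e$ is canonically $L(\mathfrak D)$, and this is precisely where the assumption ``partial isometry in $(R(\mathfrak D))'$'' is decisive.
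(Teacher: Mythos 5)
Your argument is correct in substance, but it takes a genuinely different route from the paper. You coordinatize everything through the Beurling representation $\mathfrak M_i=\oplus_n^{col}U_n^{(i)}H^2$, convert the comparison $v$ into a matrix $(D_{mn})$ over $\mathfrak D$ via the identification $e(R(\mathfrak D))'e=L(\mathfrak D)e$ on the standard form $[h_0^{1/2}\mathfrak D]_2$, and reassemble $W=\sum_{m,n}U_m^{(2)}D_{mn}U_n^{(1)*}$; the relation $\sum_m D_{mn}^*D_{mn'}=\delta_{nn'}E_n^{(1)}$ coming from $v^*v=p_1$ then makes $W^*W=\sum_n U_n^{(1)}U_n^{(1)*}$ and everything follows. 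The paper instead never leaves the abstract setting: it defines $W(xA)=(wx)A$ for $x\in K_1$, $A\in\mathcal M$, proves this is a well-defined isometry on $[K_1\mathcal M]_2$ by the single computation $\|(wx)A\|_2^2=\mathrm{tr}(|wx|^2\Phi(AA^*))=\|wR_{\Phi(AA^*)^{1/2}}x\|_2^2=\|xA\|_2^2$ (using that $|x|^2,|wx|^2\in L^1(\mathfrak D)$ for wandering vectors and that $w$ commutes with $R(\mathfrak D)$), extends by zero, and concludes $W\in R(\mathcal M)'=L(\mathcal M)$. The paper's route is shorter and sidesteps the one real technical debt in yours: the double sum defining $W$ must be shown to converge $\sigma$-strongly \emph{before} you may ``directly expand'' $W^*W$; you need to first bound the partial sums uniformly (e.g.\ via $\sum_{m\le M}D_{mn}^*D_{mn'}$ being a compression of the full column relation, giving $\|W_{M,N}\|\le 1$) and then get convergence on the dense subspace $\sum_n U_n^{(1)}U_n^{(1)*}L^2(\mathcal M)$. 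What your version buys is explicitness: $W$ is exhibited as a concrete $\mathfrak D$-valued matrix and the identity $L_Wp_1=v$ is visible, whereas the paper's $W$ is only characterized by its action on $K_1\mathcal M$. Both arguments use the type 1 hypothesis in equivalent ways (you through the Beurling decomposition, the paper through $\mathfrak M_i=[K_i\mathfrak A]_2$ and $|x|^2\in L^1(\mathfrak D)$), so neither is more general; fill in the convergence step and your proof stands.
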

  \begin{proof}
  Let $w\in (R(\mathfrak D))^{\prime}$ be a partial isometry such that $w^*w=p_1$ and $ww^*\leq p_2$. Note that
  $[ K_i\mathcal M]_2=[ K_i\mathfrak A_0^*]_2\oplus [K_i\mathfrak D]_2\oplus[ K_i\mathfrak A_0]_2$ are right reducing subspaces. For any $x\in K_1$, we have $wx\in K_2$ and $x^*x,(wx)^*(wx) \in L^1(\mathfrak D)$ by \cite[Theoren 2.3]{lab}. Note that  $R_Dx=xD\in\mathfrak M_1$ and $wR_Dx=R_D(wx)=(wx)D$ and therefore  $\|wR_Dx\|_2=\|R_Dx\|_2=\|xD\|_2$.
   Take any $A\in\mathcal M$.  Recall that $\Phi_1$ is the contraction from $L^1(\mathcal M)$  onto  $L^1(\mathfrak D)$. Since $\mbox{tr}\circ\Phi_1=\mbox{tr}$ from \cite[Proposition 2.1]{jig},
  we have that \begin{align*}
  \|R_A(wx)\|_2^2&=\|(wx)A\|_2^2=\mbox{tr}(A^*(wx)^*(wx)A)=\mbox{tr}(|wx|^2AA^*)\\
  &=\mbox{tr}(|wx|^2\Phi(AA^*))=\mbox{tr}((\Phi(AA^*)^{\frac12}(wx)^*(wx)(\Phi(AA^*))^{\frac12})\\
  &=\|(wx)(\Phi(AA^*)^{\frac12}\|_2^2=\|(R_{\Phi(AA^*))^{\frac12}}(wx)\|_2^2\\
  &=\|wR_{(\Phi(AA^*))^{\frac12}}x\|_2^2=\|R_{(\Phi(AA^*))^{\frac12}}x\|_2^2\\
  &=\mbox{tr}((\Phi(AA^*)^{\frac12}x^*x(\Phi(AA^*))^{\frac12})=\mbox{tr}(x^*x\Phi(AA^*))\\
  &=\mbox{tr}(x^*xAA^*)=\|xA\|_2^2
  .\end{align*}
  We define $W(xA)=(wx)A$ for any $x\in  K_1$ and $A\in\mathcal M$. Then $W$ is  an well defined isometry  on $[ K_1\mathcal M]_2$ with range $[w( K_1)\mathcal M]_2$.  Put $Wx=0$ for any $x\in [ K_1\mathcal M]_2^{\bot}$. Then $W $ is a partial isometry. Note that $WR_B(xA)= W(xAB)=(wx)AB=R_B(R_A(wx))=R_BW(xA)$ for all $A, B\in\mathcal M$ and $x\in  K_1$. On the other hand, if $x\in [ K_1\mathcal M]_2^{\bot}$, then $WR_Bx=W(xB)=0=R_B(Wx)$ since $ [ K_1\mathcal M]_2$ is right reducing. This implies that $W\in R(\mathcal M)^{\prime}=L(\mathcal M)$.   Note that $W( K_1)=w( K_1)\subseteq  K_2$ is a right $\mathfrak D$ module, so is $W_2\ominus W(K_1)$. It is elementary  that  $W\mathfrak M_1=[W( K_1)\mathfrak A]_2\subseteq \mathfrak M_2$  and $[\left(K_2\ominus W(K_1)\right)\mathfrak A]_2 $ are    right invariant subspaces  of $\mathfrak A$ of type 1 such that $\mathfrak M_2=W\mathfrak M_1\oplus^{col}
  [\left(K_2\ominus W(K_1)\right)\mathfrak A]_2  $. It is trivial that $\mathfrak M_1=[ K_1\mathfrak A]_2=[W^*W( K_1)\mathfrak A]_2=W^*[(W( K_1)\mathfrak A]_2$. It is known that $W^*(yA)=0$ for all $y\in K_2\ominus w
  K_1$ and $A\in\mathcal M$. Therefore $\mathfrak M_1=W^*\mathfrak M_2$ and $[\left(K_2\ominus W(K_1)\right)\mathfrak A]_2 =(I-WW^*)\mathfrak M_2$.
  \end{proof}

  \begin{theorem} Let $\mathfrak A\subsetneq \mathcal M$ be a type 1 subdiagonal algebra with respect to $\Phi$. Then $\mathfrak A$ is a maximal $\sigma$-weakly closed  subalgebra of  $\mathcal M$ if and only if one of  following  assertions holds.

  $(1)$ There is a  projection $E\in\mathcal M$ which is not in the center $Z(\mathcal M)$ such that $\mathfrak A=E\mathcal M+(I-E)\mathcal M(I-E)=\{A\in\mathcal M:(I-E)AE=0\}$.

  $(2)$ There is a projection $E\in Z(\mathcal M)\cap \mathfrak D$ such that $E\mathcal M =E\mathfrak A$ and $(I-E)\mathfrak D$ is a factor.
  
  In particular, if $\mathfrak D$ is a factor, then $\mathfrak A$ is maximal.\end{theorem}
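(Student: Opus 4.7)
The plan is to study $\sigma$-weakly closed subalgebras $\mathcal B$ with $\mathfrak A\subseteq \mathcal B\subseteq \mathcal M$ via their preannihilators $\mathcal B^\perp\subseteq H^1_0$: the subalgebra condition on $\mathcal B$ is equivalent to $\mathcal B^\perp$ being two-sided $\mathcal B$-invariant and in particular $\mathfrak A$-bi-invariant in $L^1(\mathcal M)$. By the type~1 hypothesis, $H^1_0=\oplus_{n}^{col}U_nH^1$ so that $\bigcap_m[H^1_0\mathfrak A_0^m]_1=\{0\}$, and Theorem~2.2 then forces $\mathcal B^\perp=\oplus_{n}^{col}W_nH^1$ for some column orthogonal family $\{W_n\}$. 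The extra left $\mathfrak A$-invariance translates into an $\mathfrak A$-action on the wandering subspace spanned by the $W_n$'s, which I would analyze via Proposition~2.5.

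For sufficiency of (2), since $E$ is central, $\mathfrak A=E\mathcal M\oplus(I-E)\mathfrak A$ and every $\mathcal B\supseteq \mathfrak A$ splits as $E\mathcal M\oplus(I-E)\mathcal B$, so the problem reduces to the case $\mathfrak D$ is a factor (which proves the ``in particular'' assertion). In that case, if $\mathfrak A\subsetneq \mathcal B$, Proposition~2.5 compares the wandering projection $\sum_n W_nW_n^*\in \mathfrak D$ of $\mathcal B^\perp$ to that of $H^1_0$ inside $R(\mathfrak D)^\prime$; the factor property makes any two such projections Murray--von Neumann comparable, which forces the two wandering subspaces to coincide, so $\mathcal B^\perp=H^1_0$ and $\mathcal B=\mathfrak A$. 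For sufficiency of (1), take $T\in\mathcal B\setminus\mathfrak A$; then $X=(I-E)TE\in\mathcal B$ is nonzero, and since $\mathfrak A$ contains both $E\mathcal ME$ and $(I-E)\mathcal M(I-E)$, the set $\mathcal I=\mathcal B\cap(I-E)\mathcal ME$ is a $\sigma$-weakly closed $(I-E)\mathcal M(I-E)$-$E\mathcal ME$ sub-bimodule containing $X$. The non-centrality of $E$, combined with the fact that the remaining corner $E\mathcal M(I-E)\subseteq \mathfrak A$ enforces products across corners to land back in $\mathfrak A$, forces $\mathcal I$ to exhaust $(I-E)\mathcal ME$, so $\mathcal B=\mathcal M$.

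For necessity, suppose $\mathfrak A$ is maximal and set $Q=\vee\{F\in\mathcal M: F\mathcal M\subseteq \mathfrak A\}$. A standard argument gives $Q\in\mathfrak D$ with $Q\mathcal M=Q\mathfrak A$, and Corollary~2.4 applied with $\mathcal B=\mathfrak A$ yields $(I-Q)\mathfrak AQ=0$, so $\mathfrak A\subseteq \mathcal M_Q:=Q\mathcal M+(I-Q)\mathcal M(I-Q)=\{A\in\mathcal M:(I-Q)AQ=0\}$. If $Q\notin Z(\mathcal M)$, then $\mathcal M_Q$ is a proper $\sigma$-weakly closed subalgebra of $\mathcal M$ containing $\mathfrak A$, and maximality forces $\mathfrak A=\mathcal M_Q$, giving (1). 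If $Q\in Z(\mathcal M)$, then $(I-Q)\mathfrak D$ must be a factor, for otherwise a nontrivial central projection $Z\in Z((I-Q)\mathfrak D)$ would, via another application of Theorem~2.2 together with the same bimodule argument used in sufficiency of (1), produce a $\sigma$-weakly closed subalgebra strictly between $\mathfrak A$ and $\mathcal M$, contradicting maximality; so (2) holds with $E=Q$. The main obstacle I foresee is the sufficiency of (1): one must precisely classify the $\sigma$-weakly closed $(I-E)\mathcal M(I-E)$-$E\mathcal ME$ sub-bimodules of $(I-E)\mathcal ME$ that can be adjoined to $\mathfrak A$ while preserving the subalgebra condition, and show that the non-centrality of $E$ together with the interplay with the full algebra structure of $\mathcal B$ rules out every proper nonzero such bimodule; a parallel obstacle in necessity is carefully constructing the intermediate subalgebra from a hypothetical nontrivial central projection in $(I-Q)\mathfrak D$.
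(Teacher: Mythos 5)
Your overall architecture coincides with the paper's: intermediate algebras $\mathcal B$ are represented through Theorem~2.2 (the paper applies part~(2) to $\mathcal B$ itself, getting $\mathcal B=\oplus_m^{col}W_m\mathfrak A\oplus^{col}P\mathcal M$, rather than to $\mathcal B^{\bot}$, but the two pictures are dual), the factoriality of $\mathfrak D$ is used to make wandering projections comparable in $(R(\mathfrak D))^{\prime}$ via Proposition~2.5, and the necessity direction runs through $Q=\vee\{F:F\mathcal M\subseteq\mathfrak A\}$ with the dichotomy $Q\in Z(\mathcal M)$ or not. Your necessity sketch is essentially the paper's; the ``careful construction'' you flag there is handled in the paper by the claim that for $E\in Z(\mathfrak D)$ either $E\mathcal M\subseteq\mathfrak A$ or $(I-E)\mathcal M\subseteq\mathfrak A$, proved by adjoining the elements $EU_n^*$ to $\mathfrak A$ and invoking maximality. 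The genuine gap in your sufficiency of (2) is the assertion that comparability of the wandering projections ``forces the two wandering subspaces to coincide.'' It does not: already classically $z^2H^1\subsetneq zH^1$ are distinct two‑sided invariant subspaces with equivalent one‑dimensional wandering spaces. Proposition~2.5 only produces a partial isometry $W$ with $H^2=W^*[\mathcal Bh_0^{\frac12}]_2$ (or the reverse); the paper must then (i) use that $h_0^{\frac12}$ lies in the initial space to upgrade $W$ to an isometry (resp.\ unitary), and (ii) exploit that $\mathcal B$ is a unital algebra through Proposition~2.3, via $\mathfrak A=W^*\mathcal B=W^*\mathcal B\mathcal B=\mathfrak A\mathcal B=\mathcal B$. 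Without step (ii) you only learn that $\mathcal B^{\bot}$ and $H^1_0$ have comparable wandering spaces, which is not enough. You also still need the analogue of the paper's step $P=0$ (killing the reducing summand), which again uses $Z(\mathfrak D)=\mathbb CI$ through $\Phi(P)$.

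Concerning sufficiency of (1), which you honestly leave open: your instinct that this is a real obstacle is correct, and carrying out the bimodule classification you propose shows it cannot be closed from non‑centrality of $E$ alone. Every $\sigma$-weakly closed $(I-E)\mathcal M(I-E)$--$E\mathcal ME$ sub‑bimodule of $(I-E)\mathcal ME$ has the form $(I-E)z\mathcal ME$ for a central projection $z$, and $\mathfrak A+(I-E)z\mathcal ME$ is automatically a $\sigma$-weakly closed algebra because the remaining corners of $\mathfrak A$ absorb all cross products. Thus maximality forces $Z(\mathcal M)$ to act trivially on $(I-E)\mathcal ME$, which is strictly stronger than $E\notin Z(\mathcal M)$: for $\mathcal M=M_2(\mathbb C)\oplus M_2(\mathbb C)$ and $E=e_{11}\oplus e_{11}$, the algebra $\mathfrak A=E\mathcal M+(I-E)\mathcal M(I-E)$ (upper triangular matrices in each summand) is a type~1 subdiagonal algebra satisfying (1), yet $M_2(\mathbb C)\oplus\{A:(I-E)AE=0\}$ lies strictly between $\mathfrak A$ and $\mathcal M$. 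The paper offers no argument here either (it declares this implication trivial), so on this point your proposal is no weaker than the paper --- but neither constitutes a proof, and the implication as stated appears to need an extra hypothesis such as minimality of the central projection $c(E)c(I-E)$.
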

\begin{proof}
If  assertion $(1)$ holds, then it is trivial that $\mathfrak A$ is maximal.

We assume that assertion $(2)$ holds. Then $\mathfrak A=E\mathcal M+E\mathfrak A$. In this case, $E\mathfrak A$ is a type 1 subdiagonal algebra  of $E\mathcal M$ with respect to $\Phi_E$, where $\Phi_E(EA)=E\Phi(A)$ for all $A\in\mathcal M$. It is sufficient to prove that $E\mathfrak A$ is a maximal  subalgebra in $E\mathcal M$. Without loss of generality, we may assume that $\mathfrak D$ itself  is a factor.

Let $\mathcal B$ be a $\sigma$-weakly closed subalgebra of $\mathcal M$ such that $\mathfrak A\subseteq \mathcal B\subsetneqq \mathcal M$. We show that $\mathcal B=\mathfrak A$. Since $\mathcal B$ is two-sided $\mathfrak A$ invariant, by Theorem 2.2, there are a family of column orthogonal partial isometries $\mathcal W=\{W_m:m\geq 1\}$ and a projection $P$ in  $\mathcal M$ such that
$\mathcal B=\oplus_{m\geq1}^{col}W_m\mathfrak A\oplus^{col}P \mathcal M$. Note that $P<I$ by Proposition 2.3 since $\mathcal B$ is a proper subalgebra of $\mathcal M$.
  Since $PL^2(\mathcal M)=\cap_{n\geq 1}[\mathcal Bh_0^{\frac12}\mathfrak A_0^n]_{2}$ and $[\mathcal Bh_0^{\frac12}\mathfrak A_0^n]_{2}$ is also left $\mathcal B$ invariant for any $n$, $PL^2(\mathcal M)$ is also left $\mathcal B$ invariant. This implies that $PBP=BP$ for any $B\in\mathcal B$.
    In particular, $PD=DP$ for all $D\in\mathcal B\cap \mathcal B^*$.  Note that $W_m\in \mathcal B\cap \mathcal B^*$ and $PW_m=0$ for any $m$.   Thus $PW_m=W_mP=0$ and  $W_m^*W_m\leq I-P$ for any $m$.
    On the other hand, $P\in \left(\mathcal B\cap \mathcal B^*\right)^{\prime}\subseteq \mathfrak D^{\prime}$. We have that $\Phi(P)\in Z(\mathfrak D)=\mathbb C I$. This implies that $0\leq \Phi(P)<1$ is a scalar.
   However,
   $W_m^*W_m=\Phi(W_m^*W_m)\leq I-\Phi(P)$. It follows that
 $\Phi(P)=0$ since $W_m^*W_m$ is a nonzero projection for any $m$. Thus $P=0$ and $[\mathcal Bh_0^{\frac12}]_2$ is a type 1 right invariant subspace. Let $p$ and $q$ be the projections on $L^2(\mathfrak D)$ and
 $[\mathcal Bh_0^{\frac12}]_2\ominus [\mathcal Bh_0^{\frac12}\mathfrak A_0]_2$, the wandering subspaces of $H^2$ and $[\mathcal Bh_0^{\frac12}]_2$ respectively. Then $p$ and $q$  are in the factor $\left(R(\mathfrak D)\right)^{\prime}$. Thus $p\preccurlyeq q$ or $q\preccurlyeq p$ in $\left(R(\mathfrak D)\right)^{\prime}$.

   Case 1.  $p\preccurlyeq q$.  Then $H^2=W^*[\mathcal Bh_0^{\frac12}]_2$ for a partial isometry $W\in\mathcal M$ such that $WW^*H^2=[w(L^2(\mathfrak D)\mathfrak A]_2=WH^2\subseteq [\mathcal Bh_0^{\frac12}]_2$  and $[\mathcal Bh_0^{\frac12}]_2=WH^2\oplus^{col}(I-WW^*)[\mathcal Bh_0^{\frac12}]_2$ by Proposition 2.5. It is trivial that $W$ is an isometry since $h_0^{\frac12}$ is in the initial space of $W$. On the other hand,  $W^*\mathcal B\subseteq \mathfrak A$ and $W\mathfrak A\subseteq \mathcal B$ by Proposition 2.3. Thus  $\mathfrak A=W^*\mathcal B=W^*\mathcal B\mathcal B=\mathfrak A\mathcal B=\mathcal B$.

   Case 2. $q\preccurlyeq p$. Then $[\mathcal Bh_0^{\frac12}]_2=W^*H^2$ for a partial isometry $W\in\mathcal M$. Thus $W$ is  a co-isometry.  However $W^*W[\mathcal Bh_0^{\frac12}]_2=W^*H^2 =[\mathcal Bh_0^{\frac12}]$. Then  $W $ is unitary and again $\mathcal B=\mathfrak A$.

   Conversely, we assume that $\mathfrak A$ is maximal in $\mathcal M$.
   We claim that  for any $E\in Z(\mathfrak D)$, either $E\mathcal M\subseteq \mathfrak A$ or $(I-E)\mathcal M\subseteq \mathcal M$.

  Let $0<E<I$. If $EU_n^*\in\mathfrak A$ for all $n $, then $E\mathfrak A^*\subseteq \mathfrak A$ and thus $E\mathcal M\subseteq \mathfrak A$. Assume that $EU^*_n\notin \mathfrak A$ for some $n$. Then the $\sigma$-weakly closed subalgebra generated by $\{EU^*_n:n\geq 1\}$ and $\mathfrak A$ is $\mathcal M$. Thus  $E\mathfrak A^*+\mathfrak A$ is $\sigma$-weakly dense in $\mathcal M$. In particular,  $(I-E)\mathcal M=(I-E)\mathfrak A\subseteq \mathfrak A$.

  Put $Q=\vee\{E\in \mathcal M: E\mathcal M\subseteq \mathfrak A\}$. By \cite[Lemma 3.1]{jig2},  $Q\mathcal M\subseteq \mathfrak A$ and $(I-Q)\mathfrak AQ=0$. In particular, $Q\in Z(\mathfrak D)$.

  Case 1.    $Q\notin Z(\mathcal M)$. Then $Q\mathcal M(I-Q)\not=0$.  In this case we have that $\mathfrak A=Q\mathcal M+(I-Q)\mathcal M(I-Q)$ since $\mathfrak A$ is maximal. Thus (1) holds.

   Case 2. $Q\in Z(\mathcal M)$. Then  $(I-Q)\mathfrak A$ is a maximal subalgebra of $(I-Q)\mathcal M$. In this case, $(I-Q)\mathfrak D$ is a factor. Otherwise, as the above claim, there is a central projection $E\in Z((I-Q)\mathfrak D)$ such that $E(I-Q)\not=0$ and $E(I-Q)\mathcal M\subseteq (I-Q)\mathfrak A$. This is a contradiction.  Therefore (2) holds.
\end{proof}

\section{Finiteness of type 1 subdiagonal algebras}
 We recall that a subdiagonal algebra  with respect to $\Phi$ in $\mathcal M$ is finite if there is a faithful normal finite trace $\tau$ on $\mathcal M$ such that $\tau\circ\Phi=\tau$. A longstanding open problem given in \cite{arv1} by Arveson is whether a subdiagonal algebra in a finite von Neumann algebra is automatically finite. It is still open from now on.  We answer this problem for type 1 case. We recall that all results  for type 1 subdiagonal algebras  in general von Neumann algebras hold for a finite von Neumann algebra if we replace Haagerup's noncommutative $L^p(\mathcal M)$ by the noncommutative $L^p$ space $L^p(\mathcal M,\tau)$ for a von Neumann algebra $\mathcal M$ with a a faithful normal finite trace $\tau$.
 \begin{theorem} Let $\mathcal M$ be a finite von Neumann algebra with a faithful normal finite trace $\tau$.  If $\mathfrak A$ is a type 1 subdiagonal algebra with respect to $\Phi$ in $\mathcal M$, then $\mathfrak A$ is finite, that is, there exists a faithful normal finite trace $\rho$ on $\mathcal M$ such that $\rho\circ \Phi=\rho$.\end{theorem}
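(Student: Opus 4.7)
The plan is to exhibit a faithful normal finite trace $\rho$ on $\mathcal M$ with $\rho \circ \Phi = \rho$. Since any $\Phi$-invariant normal state on $\mathcal M$ must factor as $\rho = (\rho|_{\mathfrak D}) \circ \Phi$, the natural candidate is
\[
\rho := \tau|_{\mathfrak D} \circ \Phi.
\]
Observe that $\rho$ is automatically a faithful normal state on $\mathcal M$ (as $\Phi$ and $\tau|_{\mathfrak D}$ are both faithful and normal) and that $\rho \circ \Phi = \rho$ follows immediately from idempotence of $\Phi$. The substantive content of the theorem is therefore to establish that $\rho$ is a trace.

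To verify the trace property I would invoke the type 1 structure. By equation (2.1) together with \cite[Theorem 2.7]{jig2}, $\mathfrak A_0$ is the $\sigma$-weakly closed linear span of monomials $\mathfrak D U_{i_1}\cdots U_{i_n}$, where $\{U_n\}$ is a column orthogonal family of partial isometries satisfying $U_n^{*}U_m = \delta_{nm}\, U_n^{*}U_n \in \mathfrak D$ and, by \cite[Proposition 2.3]{jig2}, $U_nU_m^{*}\in \mathfrak D$ for all $n,m$. The trace property of the ambient $\tau$ on $\mathcal M$ yields $\tau(U_nU_m^{*}) = \tau(U_m^{*}U_n) = 0$ for $n\neq m$ and $\tau(U_nU_n^{*}) = \tau(U_n^{*}U_n)$; restricted to $\mathfrak D$ these give
\[
\tau_{\mathfrak D}(U_nU_m^{*}) = \delta_{nm}\,\tau_{\mathfrak D}(U_n^{*}U_n).
\]
By normality of $\rho$ and $\sigma$-weak density of $\mathfrak A + \mathfrak A^{*}$ in $\mathcal M$, checking $\rho(xy)=\rho(yx)$ reduces to the case when $x,y$ are monomials in $\mathfrak D$, the $U_n$'s, and their adjoints. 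On such monomials one combines multiplicativity of $\Phi$ on $\mathfrak A$, the $\mathfrak D$-bimodule property of $\Phi$, and the trace property of $\tau_{\mathfrak D}$ on $\mathfrak D$ to reduce the trace identity for $\rho$ to the displayed identities.

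The principal obstacle is the combinatorial bookkeeping in this verification for monomials of arbitrary length with interleaved factors $U_i$, $U_j^{*}$, and $D\in\mathfrak D$. The key reduction uses $U_iU_j^{*}\in\mathfrak D$ to collapse any adjacent $U_iU_j^{*}$ pair into a $\mathfrak D$-valued coefficient, and $U_i^{*}U_j = \delta_{ij}U_i^{*}U_i$ to eliminate adjacent $U_i^{*}U_j$ pairs, allowing $\Phi$ of a generic monomial to be expressed recursively in terms of the $U_n$'s and $\mathfrak D$. Iterating these reductions and then applying the trace property of $\tau_{\mathfrak D}$ should yield $\rho(xy)=\rho(yx)$ on all monomials, and hence by normality on all of $\mathcal M$. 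Once $\rho$ is established to be a trace, faithfulness, normality, finiteness, and $\Phi$-invariance have already been noted, so $\mathfrak A$ is finite, giving a positive answer to Arveson's finiteness problem in the type 1 case.
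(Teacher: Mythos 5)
Your candidate $\rho=\tau|_{\mathfrak D}\circ\Phi$ is a reasonable guess (a posteriori it does work: the paper's proof in fact yields $\tau\circ\Phi=\tau$), and you are right that the whole content is the trace identity $\rho(xy)=\rho(yx)$. But your proposed verification of that identity has a genuine gap, and it is not ``combinatorial bookkeeping.'' Your two collapsing rules only apply to \emph{adjacent} pairs $U_iU_j^*$ or $U_i^*U_j$ with no intervening coefficient; a general monomial $D_0U^{\epsilon_1}D_1U^{\epsilon_2}D_2\cdots$ does not reduce, since $U_i^*DU_j$ need not lie in $\mathfrak D$ (one can only show it lies in $\mathfrak A$, using $H_0^2=\oplus^{col}_nU_nH^2$). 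Already the shortest mixed comparison exposes the problem: $\rho(DU_n\cdot U_n^*)=\tau(DU_nU_n^*)$ since $U_nU_n^*\in\mathfrak D$, while $\rho(U_n^*\cdot DU_n)=\tau(\Phi(U_n^*DU_n))=\tau(\Phi(EA))$ with $U_n^*DU_n=EA$, $E=U_n^*U_n\in\mathfrak D$, $A\in\mathfrak A$. Traciality of $\tau$ gives $\tau(DU_nU_n^*)=\tau(EA)$, so the identity you need is $\tau(\Phi(B))=\tau(B)$ for $B\in\mathfrak A$, i.e.\ $\tau|_{\mathfrak A_0}=0$. That statement is equivalent to $\tau\circ\Phi=\tau$, which is the theorem itself (if it holds, $\tau$ is already the required trace). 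So the reduction to monomials does not discharge the difficulty; it reproduces it, and none of the listed ingredients (multiplicativity of $\Phi$ on $\mathfrak A$, the bimodule property, traciality of $\tau|_{\mathfrak D}$) supplies the missing fact.

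The paper's argument is entirely different and avoids monomials. It decomposes the identity $I\in L^2(\mathcal M,\tau)$ as $I=x^*+d+x$ along $(H_0^2)^*\oplus L^2(\mathfrak D)\oplus H_0^2$, with $d\in L^2(\mathfrak D)_+$ and $x=\oplus_n U_nx_n\in H_0^2$. Comparing $D\cdot I$ with $I\cdot D$ and $U_m\cdot I$ with $I\cdot U_m$ against this orthogonal decomposition shows $d$ commutes with $\mathfrak D$ and with every $U_m$, hence with all of $\mathcal M$ (which is generated by $\mathfrak D$ and the $U_m$); a support argument using $I-E\in (H_0^2)^*+H_0^2$ forces $d$ to have full support. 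Then $\rho(A)=\tau(Ad^2)$ is the desired faithful normal finite trace with $\rho\circ\Phi=\rho$. If you want to salvage your approach, you would need an independent proof that $\tau$ annihilates $\mathfrak A_0$ in the type $1$ setting, and that is where the real work lies.
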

\begin{proof}
 Let $L^2(\mathcal M,\tau)$ be the noncommutative $L^2$ space associated with $\tau$. Then $\mathcal M\subseteq L^2(\mathcal M,\tau)$.
 We choose a faithful normal state $\varphi$ on $\mathcal M$ such that $\varphi\circ\Phi=\varphi$. If  $h_0$  is the image of $\varphi$ in $ L^1(\mathcal M,\tau)$ , that is $\varphi(A)=\tau(Ah_0)$ for all $A\in\mathcal M$, then  nonocommutative $H^2$ and $H_0^2$ are defined similarly.  Note that $\mathfrak A$ is of type 1. $H_0^2=\oplus^{col}_nU_nH^2$ for a family of column orthogonal  partial isometries $\{U_n:n\geq 1\}$ in $\mathcal M$ such that $U_n^*U_m=0$ and  $U_n^*U_n\in\mathfrak D$ as in formula $(2.1)$. Therefore
 \begin{equation}I=x^*+d+x=(\oplus_{n\geq1} U_nx_n)^*+d+\oplus_{n\geq1} U_nx_n,
  \end{equation}
 where  $d\in L^2(\mathfrak D)_+$,    $\{x_n:n\geq 1\}\subset H^2$ and $x=\oplus_{n\geq 1} U_nx_n$. It follows that 
 $$D=D(\oplus_{n\geq1} U_nx_n)^*+Dd+D(\oplus_{n\geq 1} U_nx_n)=(\oplus_{n\geq1} U_nx_n)^*D+dD+(\oplus_{n\geq 1} U_nx_n)D$$
  for any $D\in\mathfrak D$. Since  $D(\oplus_{n\geq 1} U_nx_n), (\oplus_{n\geq 1} U_nx_n)D\in H_0^2$, we have $Dd=dD$ for all $D\in \mathfrak D$ by $(3.1)$.

  Again by $(3.1)$, we have  for any $m\geq 1$, 
  \begin{align*}U_m &=U_m(\oplus_{n\geq1} U_nx_n)^*+U_md+U_m(\oplus_{n\geq1} U_nx_n)\\
  &=(\oplus_{n\geq1} U_nx_n)^*U_m+dU_m+(\oplus_{n\geq1} U_nx_n)U_m.\end{align*}  It follows that
 $$U_md- dU_m=U_m(\oplus_{n\geq 1} U_nx_n)^* +U_m(\oplus_{n\geq1} U_nx_n)-(\oplus_{n\geq 1} U_nx_n)^*U_m-(\oplus_{n\geq1} U_nx_n)U_m .$$ We recall that $\tau(y^*x)=\langle x,y\rangle$ denote the inner product  for any $x,y\in L^2(\mathcal M,\tau)$.
 It is elementary that $\langle U_md, U_mU_nx_n\rangle =\langle d,U_m^*U_mU_nx_n\rangle=0$ since $U_m^*U_m\in\mathfrak D$. Similarly, we have $\langle U_md, U_nx_nU_m\rangle =\langle U_n^*U_md,  x_nU_m\rangle=0$. We also note that
   $$\langle U_md, U_m(x_nU_m)^*\rangle=\langle U_m^*U_md, (x_nU_n)^*\rangle=0$$ and $$\langle U_md, (U_nx_n)^*U_m\rangle=\langle U_md, x_n^*U_n^* U_m\rangle=\langle d,U_m^* x_n^*U_n^* U_m\rangle= 0.$$ This implies that
   $$\langle U_md, U_m(\oplus_{n\geq1} U_nx_n)^* +U_m(\oplus_{n\geq1} U_nx_n)-(\oplus_{n\geq1} U_nx_n)^*U_m-(\oplus_{n\geq1} U_nx_n)U_m\rangle=0.$$  By the same way, we have
   $$\langle  d U_m, U_m(\oplus_{n\geq1} U_nx_n)^* +U_m(\oplus_{n\geq1} U_nx_n)-(\oplus_{n\geq1} U_nx_n)^*U_m-(\oplus_{n\geq1} U_nx_n)U_m\rangle=0.$$ This means that  $\langle U_md-dU_m,U_md-dU_m\rangle=0$ and thus $U_md-dU_m=0$. By \cite[Theorem 2.7]{jig2}, we know that $\mathcal M$ is the  von Neumann  algebra generated  by $\mathfrak D$ and $\{U_m:m\geq1\}$. It now follows that  $Ad=dA$ and therefore
   \begin{equation} Ad^2=d^2A
   \end{equation}
    for all $A\in\mathcal M$.
    Now let $E$ be the support projection of $d^2\in L^1(\mathfrak D)$(cf.\cite[Chapter III, Definition 3.7]{tak}). Then $E\in\mathfrak D$ and $(I-E)d=d(I-E)=0$. However,
    \begin{align*}
    I-E &=(I-E)x^*(I-E)+(I-E)d(I-E)+(I-E)x(I-E)\\
    &=(I-E)x^*(I-E) +(I-E)x(I-E)\end{align*} by $(3.1)$. Note that $I-E\geq 0$ and $(I-E)x^*(I-E) +(I-E)x(I-E)\in (H_0^2)^*+H_0^2$. It follows that $I-E=0$. Thus $d\in L^2(\mathfrak D)$ is  both left and right separating.
   Put $\rho(A)=\langle Ad,d\rangle=\tau(Ad^2)$, $\forall A\in\mathcal M$.  Then  $\rho$ is a faithful normal state on $\mathcal M$. It is trivial that $\rho\circ \Phi=\rho$ since $d^2\in L^1(\mathfrak D)$.  Now for any $A,B \in\mathcal M$, we have $\rho(AB)=\tau(ABd^2)=\tau(Bd^2A)=\tau(d^2BA)=\rho(BA)$ by $(3.2)$. Thus $\rho $ is a faithful normal trace on $\mathcal M$ such that $\rho\circ\Phi=\rho$ and $\mathfrak A$ is finite.
\end{proof}

\end{document}